\theoremstyle{plain}
\newtheorem{thm}{Theorem}
\newtheorem{lem}[thm]{Lemma}
\newtheorem{cor}[thm]{Corollary}
\newtheorem{pro}[thm]{Proposition}
\theoremstyle{definition}
\numberwithin{thm}{section}
\newcommand{\Hom}{\operatorname{Hom}}
\newcommand{\End}{\operatorname{End}}
\newcommand{\Ext}{\operatorname{Ext}}
\newcommand{\md}{\operatorname{mod}}
\newcommand{\gr}{\operatorname{gr}}
\newcommand{\coh}{\operatorname{coh}}
\newcommand{\Lin}{\operatorname{Lin}\nolimits}
\begin{document}

\title
{A note on sheaves without self-extensions on the projective $n$-space.}  
\author{Dieter Happel}
\address{Fakult\"at f\"ur Mathematik, Technische Universit\"at 
Chemnitz, 09107 Chemnitz, Germany. email: happel@mathematik.tu-chemnitz.de}
\author{Dan Zacharia}
\address{Department of Mathematics, Syracuse University,
Syracuse, NY 13244-0001, USA. email: zacharia@syr.edu}

\thanks{The second author is supported by the NSA grant H98230-11-1-0152.}
\subjclass[2000]{Primary 14F05,
16E10. Secondary 16E65, 16G20, 16G70}
\keywords{Exterior algebra, projective space, coherent sheaves, 
vector bundles, exceptional objects} 
%\dedicatory{}

\begin{abstract} Let ${\bf P}^n$ be the projective $n-$space over the 
complex numbers. In this note we show that an indecomposable rigid coherent
sheaf on ${\bf P}^n$ has a trivial endomorphism algebra. This generalizes 
a result of Drezet for $n=2.$
\end{abstract}

\maketitle

%\section{Preliminaries.}

Let  ${\bf P}^n$ be the projective $n$-space over the complex numbers. 
Recall 
that an {\it exceptional sheaf} is a coherent sheaf $E$ such that 
$\Ext^i(E,E)=0$ for all $i>0$ and $\End E\cong\mathbb C$.  Dr\'ezet 
proved in \cite{D} that if $E$ is an indecomposable sheaf over  ${\bf P}^2$ 
such that $\Ext^1(E,E)=0$ then its endomorphism ring is trivial, and also 
that $\Ext^2(E,E)=0$. Moreover, the sheaf is locally free. Partly motivated
 by this result, we prove in this short note that if $E$ is an indecomposable 
coherent sheaf over the projective $n$-space such that $\Ext^1(E,E)=0$, then 
we automatically get that $\End E\cong\mathbb C$.  The proof involves reducing
 the problem to indecomposable linear modules without self extensions over the 
polynomial algebra.  In the
 second part of this article, we look at the Auslander-Reiten quiver of the 
derived category of coherent sheaves over the projective $n$-space. It is
 known (\cite {MVZ1}) that if $n>1$, then all the components are of type 
$\mathbb ZA_{\infty}$.  Then, using the Bernstein-Gelfand-Gelfand 
correspondence (\cite{BGG}) we prove that each connected component 
contains at most one sheaf. We also show that in this case the sheaf 
lies on the boundary of the component.

Throughout this article, $S$ will denote the polynomial ring in $n+1$ 
variables with coefficients in $\mathbb C$. Its Koszul dual is  the 
exterior algebra in $n+1$ variables $x_0, x_1,\ldots,x_n$ which we 
denote by $R$. It is well known that $R$ is a graded, local, finite 
dimensional algebra over $\mathbb C$.  Moreover, $R$ is a selfinjective  
algebra, that is, the notions of free and of injective $R$-modules coincide.  
Denote by $\Lin R$  and $\Lin S$ the categories of linear $R$-modules 
(linear $S$-modules respectively), that is, of the modules having a 
linear free resolution.  It is well known that both $\Lin R$  and 
$\Lin S$ are closed under extensions and cokernels of monomorphisms.
 We have mutual inverse Koszul dualities:
$$\xymatrix@R=15pt{
    \Lin R
   \ar@/^/[r]^{\mathcal E}  & \Lin S
   \ar@/^/[l]^{\mathcal F}}$$
given by $\mathcal E(M)=\bigoplus_{n\geq 0}\Ext_R^n(M,\mathbb C)$
with the obvious action of $S$ on $\mathcal  E(M)$, and with the inverse
duality $\mathcal F$ defined in a similar way.

Let $\md ^Z R$ be the category of finitely generated graded $R$-modules and graded (degree $0$) homomorphisms, where we denote by $\Hom_R(M,N)_0$ the degree zero homomorphisms from $M$ to $N$. Denote by $\underline\md ^Z R$ the stable category of finitely generated graded $R$-modules. The objects of $\underline\md ^Z R$ are the finitely generated graded $R$-modules, and the morphisms are given by $$\underline\Hom_R(M,N)_0=\Hom_R(M,N)_0/{\mathcal P(M,N)_0}$$ where $\mathcal P(M,N)_0$ is the space of graded homomorphisms from $M$ to $N$ factoring through a free $R$-module. The stable graded category $\underline\md ^Z R$ is a triangulated category where the shift functor is given by the first cosyzygy, that is, every distinguished triangle in the stable category is of the form
$A\rightarrow B\rightarrow C\rightarrow\Omega^{-1}A$ where $\Omega\colon\underline\md ^Z R\rightarrow\underline\md ^Z R$ denotes the syzygy functor.  Note also that the stable category $\underline\md^ZR$ has a Serre duality: 
 $$D\underline\Hom_R(X,\Omega^{-m}Y)_0\cong\underline\Hom_R(Y,\Omega^{m+1}X(n+1))_0$$
for each integer $m$ and all $R$-modules $X$ and $Y$, where $X(n+1)$ is the graded shift of $X$. It is immediate to see that we can derive the well-known Auslander-Reiten formula from the above in the case when $m=1$.

Let $M=\bigoplus M_i$ be a finitely generated graded module over the exterior algebra $R$ and let $\xi$ be a homogeneous element of degree 1 in $R$. Following \cite{BGG}, we can view the left multiplication by $\xi$ on $M$ as inducing a complex of $K$-vector spaces $$L_{\xi}M\colon\  \  \cdots M_{i-1}\stackrel{\cdot\xi}\longrightarrow M_i\stackrel{\cdot\xi}\longrightarrow M_{i+1}\rightarrow\cdots$$  We call the graded module $M$ {\it nice} or {\it proper}, if the homology $H_i(L_{\xi}M)=0$ for each $i\neq 0$ and for each homogeneous element $\xi\ne 0$ of degree 1. It is well-known that a graded module $M$ is free if and only if $H_i(L_{\xi}M)=0$ for each $i\in\mathbb Z$ and for each nonzero homogeneous element $\xi\in R_1$. 

 The derived category $\mathcal D^b(\coh {\bf P}^n)$ of coherent sheaves over the projective 
space is also triangulated and, the BGG correspondence (see \cite{BGG}) $$\Phi\colon\underline\md ^Z R\rightarrow\mathcal D^b(\coh {\bf P}^n)$$  is an exact equivalence of triangulated categories, that assigns to each finitely generated graded $R$-module the equivalence class of a bounded complex of vector bundles on ${\bf P}^n$. If $M=\bigoplus M_i$,  we use also $M_i$ to denote the trivial bundle with finite dimensional fiber $M_i$ and we set:
 $$\Phi(M)\colon\ \ \ \dots\longrightarrow\mathcal O(i)\otimes M_i\stackrel{\delta^i}\longrightarrow\mathcal O(i+1)\otimes M_{i+1}\longrightarrow\dots$$
where the differentials $\delta^i$ are defined in the following way. Let $x_0, x_1,\ldots, x_n$ be a basis of $V$, and let $\xi_0, \xi_1,\ldots, \xi_n$ denote the dual basis of $V^*$. Then $\delta^i=\sum_{j=0}^n\xi_j\otimes x_j$. Moreover for each nice $R$-module $M$, the complex $\Phi(M)$ is quasi-isomorphic to the stalk complex of a vector bundle over the projective $n$-space, and all the vector bundles can be obtained in this way from the nice $R$-modules. 
 
Throughout this article if $M$ is a graded module, then its graded shift, denoted by $M(i)$, is the graded module given by $M(i)_n=M_{i+n}$ for all integers $n$. If $M$ is a graded module, we denote its truncation at the $k$-th level by $M_{\geq k}=M_k\oplus M_{k+1}\oplus\cdots$ 

\section{The endomorphism ring of a rigid sheaf.}

\medskip
Let $F$ be an indecomposable coherent sheaf over the projective $n$-space. Assume also that $F$ is {\it rigid}, that is $\Ext^1(F,F)=0$. We prove in this section that the endomorphism ring of $F$ is trivial. This is done by reducing the problem to indecomposable rigid linear modules over the polynomial algebra $S$ in $n+1$ variables.  

Let $M$ be a finitely generated, graded, indecomposable non projective $S$-module. We say that $M$ has no graded self-extensions, if every short exact sequence of graded $S$-modules $0\rightarrow M\rightarrow X\rightarrow M\rightarrow 0$ splits, that is $\Ext_S^1(M,M)_0=0$. 

In order to prove our next result we recall the following definition and facts from \cite{GMRSZ}.  We say that a module $M\in\md^ZS$ has a {\it linear presentation} if it has a graded free presentation 
$P_1\rightarrow P_0\rightarrow M\rightarrow 0$ where $P_0$ is generated in degree 0 and $P_1$ is generated in degree 1. Obviously such a linear presentation is always minimal.  A stronger notion is that of a {\it linear} module. We say that a graded module $M$ is linear, if it has a minimal graded free resolution:
$$\cdots P_k\rightarrow\cdots P_1\rightarrow P_0\rightarrow M\rightarrow 0$$
where for each $i\ge 0$, the free module $P_i$ is generated in degree $i$. 
 If we denote by $\mathcal L_S$ the subcategory of $\md^ZS$ consisting of the modules with a linear presentation, then we have an exact equivalence of categories $\mathcal L_S\cong\mathcal L_{S/{J^2}}$ where $J$ denotes the maximal homogeneous ideal of $S$,  (\cite{GMRSZ}). This equivalence is given by assigning to each module $M$ having a linear presentation over $S$, the module $M/{J^2M}$. We start with the following background result:

\begin{lem} An exact sequence $0\rightarrow M\rightarrow E\rightarrow N\rightarrow 0$ in $\mathcal L_S$ 
splits if and only if the induced sequence $0\rightarrow M/{J^2M}\rightarrow E/{J^2E}\rightarrow N/{J^2N}\rightarrow 0$ splits too.
\end{lem}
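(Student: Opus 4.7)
The plan is to exploit the exact equivalence of categories $\mathcal{L}_S \cong \mathcal{L}_{S/J^2}$ recalled above from \cite{GMRSZ}, which is realized by the reduction functor $F := (-)/J^2(-)$. Because $F$ is an equivalence it is in particular fully faithful and additive, and because it is exact it carries short exact sequences in $\mathcal{L}_S$ to short exact sequences in $\mathcal{L}_{S/J^2}$. In particular the induced sequence
$$0 \to M/J^2M \to E/J^2E \to N/J^2N \to 0$$
is automatically exact, so the question of its splitting makes sense.

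The forward direction is then essentially a triviality: if $s\colon N \to E$ is a splitting of the quotient map $p\colon E \to N$, then applying the additive functor $F$ to the identity $p \circ s = \mathrm{id}_N$ yields $F(p) \circ F(s) = \mathrm{id}_{F(N)}$, so $F(s)$ splits the induced sequence.

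For the reverse direction, suppose we are given a splitting $\sigma\colon N/J^2N \to E/J^2E$. Full faithfulness of $F$ supplies a unique morphism $\tilde{s}\colon N \to E$ in $\mathcal{L}_S$ with $F(\tilde{s}) = \sigma$. Then
$$F(p \circ \tilde{s}) \;=\; F(p) \circ \sigma \;=\; \mathrm{id}_{N/J^2N} \;=\; F(\mathrm{id}_N),$$
and faithfulness of $F$ forces $p \circ \tilde{s} = \mathrm{id}_N$, as required.

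I do not anticipate any serious obstacle here: the lemma is a formal consequence of the exact equivalence from \cite{GMRSZ}, combined with the standard fact that fully faithful additive functors reflect splittings. The only ingredient that has to be cited directly is the existence and exactness of that equivalence; once this is in hand, everything else is category-theoretic bookkeeping.
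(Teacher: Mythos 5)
Your argument is correct, but it takes a more abstract route than the paper's. Both proofs ultimately rest on the same ingredient --- the equivalence $\mathcal L_S \cong \mathcal L_{S/J^2}$ of \cite{GMRSZ}, implemented by $M \mapsto M/{J^2M}$ --- but you use it as a black box: a fully faithful additive functor reflects sections (write the section $\sigma$ as $F(\tilde s)$ by fullness, then cancel $F$ by faithfulness), and exactness of the equivalence guarantees that the induced sequence is exact and is literally $F$ applied to the original one. The paper instead opens up the black box: it reduces the claim to the splitting of the monomorphism $M\to E$, passes to the category $\mathcal C$ of linear presentations $P_1\to P_0$, lifts the retraction $\tilde q$ to maps $\tilde q_1,\tilde q_0$ on the reduced presentations, lifts those in turn to maps $q_1,q_0$ over $S$ satisfying $q_1h_1=1_{P_1}$ and $q_0h_0=1_{P_0}$, and finally descends $q_0$ to an explicit retraction $l\colon E\to M$ with $lh=1_M$; in effect, the paper re-proves concretely, in exactly the case needed, the fullness and faithfulness that you invoke abstractly. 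What each approach buys: yours is shorter and makes transparent that the lemma is purely formal once the equivalence is granted; the paper's is self-contained, producing the splitting by hand, so it does not depend on the cited equivalence being exact and being realized by the reduction functor, but only on the elementary fact that degree-zero morphisms between linear presentations can be lifted through reduction modulo $J^2$. The one thing you should state explicitly is that your entire argument leans on the precise form of the \cite{GMRSZ} result as quoted in the paper (the functor $(-)/{J^2(-)}$ is an \emph{exact} equivalence, hence in particular full); with that citation in hand, your proof is complete.
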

\begin{proof} To prove this, it is enough to show that a monomorphism $M\rightarrow E$ in $\mathcal L_S$ splits if and only if the induced map $M/{J^2M}\rightarrow E/{J^2E}$ is also a splittable monomorphism, and one direction is trivial. For the other direction, recall that the category $\mathcal L_S$ is equivalent to the category $\mathcal C$ whose objects are graded morphisms $f\colon P_1\rightarrow P_0$ between free $S$-modules, where $P_0$ is generated in degree zero, $P_1$ is generated in degree one, and the degree one component of $f$ is a monomorphism from the degree one component of $P_1$ to the degree one component of $P_0$. A morphism in $\mathcal C$ from $f\colon P_1\rightarrow P_0$ to $g\colon Q_1\rightarrow Q_0$ is a pair 
$(u_1,u_0)$ of graded homomorphisms such that the diagram 
 $$ 
\xymatrix
{ &P_1 \ar^{u_1}[d]
\ar^{f}[r] &P_0\ar^{u_0}[d]\\ &Q_1 \ar^{g}[r]
&Q_0} 
$$
is commutative. Let $M$ and $E$ be two modules with linear presentations $P_1\stackrel{f}\rightarrow P_0$ and 
$Q_1\stackrel{g}\rightarrow Q_0$, and let $h\colon M\rightarrow E$ be a homomorphismm such that the induced map $\tilde h\colon\ M/{J^2M}\rightarrow E/{J^2E}$ is a splittable monomorphism.  We have an exact commutative diagram:
$$ 
\xymatrix
{ &P_1/{J^2P_1} \ar^{\tilde h_1}[d]
\ar^{\tilde f}[r] &P_0/{J^2P_0}\ar^{\tilde h_0}[d]\ar^{\tilde\pi_M}[r]
&M/{J^2M}\ar^{\tilde h}[d] \ar[r] &0\\ &Q_1/{J^2Q_1}\ar^{\tilde q_1}@{.>}[d] \ar^{\tilde g}[r]
&Q_0/{J^2Q_0}\ar^{\tilde q_0}@{.>}[d]\ar^{\tilde\pi_E}[r]&E/{J^2E}\ar^{\tilde q}[d]\ar[r]& 0\\ &P_1/{J^2P_1}
\ar^{\tilde f}[r] &P_0/{J^2P_0}\ar^{\tilde\pi_M}[r]
&M/{J^2M} \ar[r] &0} 
$$
where $\tilde q\tilde h=1_{M/{J^2M}}$. Observe that the liftings $\tilde q_1$ and $\tilde q_0$ also have the property that $\tilde q_1\tilde h_1=1_{P_1/{J^2P_1}}$ and $\tilde q_0\tilde h_0=1_{P_0/{J^2P_0}}$. By lifting to $\mathcal L_S$ we obtain the following exact commutative diagram:
$$ 
\xymatrix
{ &P_1 \ar^{h_1}[d]
\ar^{f}[r] &P_0\ar^{h_0}[d]\ar^{\pi_M}[r]
&M\ar^{h}[d] \ar[r] &0\\ &Q_1\ar^{q_1}[d] \ar^{g}[r]
&Q_0\ar^{q_0}[d]\ar^{\pi_E}[r]&E\ar[r]& 0\\ &P_1
\ar^{f}[r] &P_0\ar^{\pi_M}[r]
&M \ar[r] &0} 
$$
and we have that $q_1h_1=1_{P_1}$ and $q_0h_0=1_{P_0}$. There exists an unique homomorphism $l\colon E\rightarrow M$ such that $l\pi_E=\pi_Mq_0$, and it follows immediately that $lh=1_M$. 
\end{proof}

We will also need the following results about graded modules over the polynomial algebra $S=\mathbb C[x_0,\ldots x_n]$:

\begin{lem} Let $0\rightarrow A\stackrel{f}\rightarrow B\stackrel{g}\rightarrow C\rightarrow 0$ be an exact sequence in $\md^{\mathbb Z}S$ where $A$ is a linear module and $C$ is semisimple generated in a single degree $m\ge 0$. Then the sequence splits.
\begin{proof} It suffices to show that the sequence $0\rightarrow A_{\ge m}\stackrel{f_{\ge m}}\longrightarrow B_{\ge m}\stackrel{g_{\ge m}}\longrightarrow C\rightarrow 0$ splits in the category of graded $S$-modules since we can easily construct a right inverse to $g$ from a right inverse to $g_{\ge m}$. Each of the modules $A_{\ge m}, B_{\ge m}, C$ is an $m$-shift of a linear $S$-module, so by Koszul duality we obtain an exact sequence over the exterior algebra $R$:

$$0\rightarrow\mathcal E(C)\rightarrow\mathcal E( B_{\ge m})\rightarrow \mathcal E( A_{\ge m})\rightarrow 0$$
\noindent But $R$ is selfinjective and $\mathcal E(C)$ is free, so this sequence splits. The lemma follows now immediately.
\end{proof}
\end{lem}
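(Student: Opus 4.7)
The natural approach is to transfer the problem to the exterior algebra $R$ via Koszul duality, where a semisimple $S$-module concentrated in a single degree corresponds to a free $R$-module, and then exploit the self-injectivity of $R$. Since the Koszul duality functor is only available on $\Lin S$, the first task is to replace the given sequence by one inside $\Lin S$.

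Step one is a truncation. I would pass to the sequence $0\to A_{\ge m}\to B_{\ge m}\to C\to 0$, using that $C$ lives only in degree $m$, so any graded splitting of $g_{\ge m}$ extends by zero in degrees below $m$ to a splitting of $g$ itself. Step two is to verify that, after the shift by $m$, each of the three terms lies in $\Lin S$: the module $A_{\ge m}$ is linear up to shift because truncating a linear module yields the tail of its linear resolution; the semisimple module $C$ concentrated in one degree has the Koszul resolution as its minimal free resolution, hence is linear up to shift; and $B_{\ge m}$ is linear up to shift by the extension-closure of $\Lin S$ recorded earlier in the paper.

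Step three is the heart of the argument. Applying Koszul duality produces a short exact sequence of graded linear $R$-modules
\[
0\to \mathcal E(C)\to \mathcal E(B_{\ge m})\to \mathcal E(A_{\ge m})\to 0,
\]
in which $\mathcal E(C)$ is a direct sum of copies of $R$, because after shifting $C$ is a direct sum of copies of the simple $S$-module $S/J$, whose Koszul dual is $R$. Since $R$ is self-injective, $\mathcal E(C)$ is injective over $R$, and so the sequence splits. Transferring the splitting back via the inverse duality yields a splitting of the truncated sequence in $\Lin S$, which by step one lifts to a splitting of the original sequence in $\md^{\mathbb Z}S$.

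I expect the only genuine obstacle to be the bookkeeping around shifts, and in particular the verification that $B_{\ge m}$ lies in $\Lin S$ up to shift; once the three terms are known to be linear, the self-injectivity of $R$ does all the work.
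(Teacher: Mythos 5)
Your proposal is correct and follows essentially the same route as the paper's own proof: truncate at degree $m$ (noting that a splitting of $g_{\ge m}$ gives one of $g$ since $C$ lives only in degree $m$), observe that all three truncated terms are $m$-shifts of linear modules, apply Koszul duality to get a short exact sequence of $R$-modules ending in the free (hence injective, by self-injectivity of $R$) module $\mathcal E(C)$, and transfer the resulting splitting back. The only caveat is your justification that $A_{\ge m}$ is linear up to shift --- the truncation is not literally ``the tail of the linear resolution'' (that would be a syzygy); the correct statement is the standard fact that truncations of Koszul modules over a Koszul algebra are again Koszul after shifting, which the paper also uses without proof.
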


 \begin{lem} Let $C$ be a linear $S$-module. Let $i$ be a positive integer and assume that  $\Ext_S^1(C_{\geq i},C)_0=0$. Then $\Ext_S^1(C_{\geq i},C_{\geq i})_0=0$.
\end{lem}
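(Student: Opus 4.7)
The plan is to exploit the canonical short exact sequence of graded $S$-modules
$$0\to C_{\geq i}\to C\to C/C_{\geq i}\to 0,$$
where the quotient $C/C_{\geq i}$ is supported in degrees $0,1,\ldots,i-1$ because $C_{\geq i}$ is a graded submodule containing everything in degrees $\geq i$. Applying $\Hom_S(C_{\geq i},-)$ and passing to degree zero parts yields a fragment of the long exact sequence
$$\Hom_S(C_{\geq i},C/C_{\geq i})_0\to\Ext_S^1(C_{\geq i},C_{\geq i})_0\to\Ext_S^1(C_{\geq i},C)_0.$$

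The rightmost term vanishes by hypothesis, so it suffices to see that the leftmost term is zero; then $\Ext_S^1(C_{\geq i},C_{\geq i})_0$ is squeezed between two zeros and the lemma follows. The vanishing of the leftmost term is purely a matter of the gradings involved: a degree zero graded homomorphism must send the component in degree $j$ of the source to the component in degree $j$ of the target, but the source $C_{\geq i}$ lives only in degrees $\geq i$ whereas the target $C/C_{\geq i}$ is concentrated in degrees $<i$, so no nonzero degree zero map can exist.

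I do not foresee any genuine obstacle here; the argument is formal once the correct short exact sequence is chosen. The only thing worth checking carefully is the grading convention recalled at the end of the introduction (that $M_{\geq k}$ is a graded submodule whose quotient is concentrated in strictly smaller degrees), which is what makes the $\Hom$-group vanish on the nose rather than merely being small. Note in particular that no appeal to linearity of $C$, to the previous lemma on semisimple cokernels, or to Koszul duality is needed for this step; linearity of $C$ will only be used later, when Lemma~1.2 is invoked to relate $\Ext^1$ over $S$ to $\Ext^1$ over $S/J^2$.
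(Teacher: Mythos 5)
Your proof is correct, but it takes a genuinely different route from the paper's. The paper argues by contradiction at the level of extensions: it takes a hypothetical nonsplit extension $0\rightarrow C_{\geq i}\rightarrow X\rightarrow C_{\geq i}\rightarrow 0$, forms the pushout along the inclusion $j\colon C_{\geq i}\rightarrow C$, splits the resulting sequence $0\rightarrow C\rightarrow Y\rightarrow C_{\geq i}\rightarrow 0$ using the hypothesis, and then truncates at degree $i$ to produce a split sequence representing the same class in $\Ext^1$ as the original one --- in effect an explicit, diagram-level proof that the induced map $j_*\colon\Ext^1_S(C_{\geq i},C_{\geq i})_0\rightarrow\Ext^1_S(C_{\geq i},C)_0$ is injective. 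Your argument establishes exactly that same injectivity, but functorially: it is the relevant fragment of the long exact sequence for $\Hom_S(C_{\geq i},-)_0$ applied to $0\rightarrow C_{\geq i}\rightarrow C\rightarrow C/C_{\geq i}\rightarrow 0$, with injectivity coming from the vanishing $\Hom_S(C_{\geq i},C/C_{\geq i})_0=0$, which is immediate from the disjointness of the degree supports (source in degrees $\geq i$, target in degrees $<i$). Both proofs are valid; yours is shorter and makes transparent that linearity of $C$ is never used in this lemma (it is in fact also unused in the paper's own proof --- it only matters where the lemma is applied, in Theorem 1.5 in combination with Lemmas 1.1 and 1.2). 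What the paper's hands-on version buys is that it never invokes the long exact sequence of graded $\Ext$ in the category of graded modules with degree-zero maps; that is a standard fact, though, and your appeal to it is unobjectionable, since $\Ext^1_S(-,-)_0$ computes extensions in the abelian category $\md^Z S$ and long exact sequences hold there as usual.
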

\begin{proof} Assume to the contrary that $\Ext_S^1(C_{\geq i},C_{\geq i})_0\ne 0$. We have the following pushout diagram of graded modules: 
$$ 
\xymatrix
{ &0\ar[r]&C_{\geq i} \ar^{j}[d]
\ar^{\alpha}[r] &X\ar^{h}[d]\ar^{\beta}[r]
&C_{\geq i}\ar@{=}[d] \ar[r] &0\\ &0 \ar[r] &C \ar^{\gamma}[r]
&Y\ar^{\delta}[r] &C_{\geq i}\ar[r]& 0}
$$
where $j$ is the inclusion map. The bottom sequence splits by assumption so we have homomorphisms $q\colon C_{\geq i}\rightarrow Y$ and $p\colon Y\rightarrow C$ such that $\delta q=1$ and $p\gamma=1$. But then each graded component of $\gamma,\delta$ splits so we obtain an induced split exact sequence $0\rightarrow C_{\geq i}\rightarrow Y_{\geq i}\rightarrow C_{\geq i}\rightarrow 0$ and an induced commutative diagram:
$$ 
\xymatrix
{ &0\ar[r]&C_{\geq i} \ar@{=}[d]
\ar^{\alpha}[r] &X\ar^{h}[d]\ar^{\beta}[r]
&C_{\geq i}\ar@{=}[d] \ar[r] &0\\ &0 \ar[r] &C_{\geq i} \ar^{\gamma_{\geq i}}[r]
&Y_{\geq i}\ar^{\delta_{\geq i}}[r] &C_{\geq i}\ar[r]& 0} 
$$
and since the two extensions represent the same element in $\Ext^1$ they both split.
\end{proof}

The following result holds for an arbitrary Koszul algebra over the complex numbers.
\begin{lem}  Let $\Gamma$ be a Koszul algebra and let $M\in\mathcal L_\Gamma$ be an indecomposable non projective 
$\Gamma$-module of Loewy length two having no graded self-extensions. Then $\End_{\Gamma}(M)_0\cong\mathbb C$.
\end{lem}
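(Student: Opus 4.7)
The strategy is to reduce the problem to representations of a generalized Kronecker quiver and then to invoke the classical Happel--Ringel theorem on rigid modules over hereditary algebras. Since $J^{2}M=0$, the equivalence $\mathcal{L}_{\Gamma}\simeq\mathcal{L}_{\Gamma/J^{2}}$ given by $N\mapsto N/J^{2}N$ (stated for the polynomial algebra $S$ in the preamble to Lemma~1.1, but whose proof extends verbatim to any Koszul algebra) fixes $M$. Because $\mathcal{L}_{\Gamma}$ is closed under extensions, $\Ext^{1}_{\Gamma}(M,M)_{0}$ coincides with the corresponding Ext computed inside $\mathcal{L}_{\Gamma}$, and the Koszul-algebra analogue of Lemma~1.1 shows that a short exact sequence in $\mathcal{L}_{\Gamma}$ splits if and only if its image in $\mathcal{L}_{\Gamma/J^{2}}$ splits. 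Hence all the hypotheses of the lemma transport along the equivalence, and we may replace $\Gamma$ by $\Gamma/J^{2}=\mathbb{C}\oplus\Gamma_{1}$, i.e., assume that $\Gamma$ itself has Loewy length two.

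After this reduction, set $V=\Gamma_{1}$ so that $V\cdot V=0$. A graded $\Gamma$-module of Loewy length two generated in degree zero is precisely the data $(M_{0},M_{1},\mu\colon V\otimes M_{0}\to M_{1})$, and such triples are the objects of the category of representations of the generalized Kronecker quiver $Q$ with two vertices and $\dim V$ arrows from the first to the second. The identification matches $\End_{\Gamma}(-)_{0}$ and $\Ext^{1}_{\Gamma}(-,-)_{0}$ with their quiver-representation counterparts. Since the path algebra $\mathbb{C}Q$ is hereditary, the theorem of Happel--Ringel applies: any indecomposable module over a hereditary Artin algebra with vanishing self-extension group has endomorphism ring that is a division algebra, which over the algebraically closed base $\mathbb{C}$ must equal $\mathbb{C}$. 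We conclude $\End_{\Gamma}(M)_{0}\cong\mathbb{C}$.

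The hard part is the first reduction: one has to verify that the vanishing of $\Ext^{1}_{\Gamma}(M,M)_{0}$ is really preserved when $M$ is viewed as a module over $\Gamma/J^{2}$. This is the content of the Koszul-algebra version of Lemma~1.1, whose proof depends only on the formal structure of linear presentations $P_{1}\to P_{0}$ and so works for an arbitrary Koszul algebra. Once this compatibility is in hand the remainder of the argument becomes a direct application of the classical theory of exceptional modules over hereditary algebras.
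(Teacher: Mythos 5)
Your argument is essentially correct, but it takes a genuinely different route from the paper's. The paper does not reduce to a hereditary algebra: it reruns the Happel--Ringel pushout argument directly in the graded category over $\Gamma$. Given a nonzero non-invertible $f\in\End_\Gamma(M)_0$ with image $N$, kernel $L$, and $T=M/N$, it shows $\Ext^2_\Gamma(T,L)_0=0$ by a pure degree count (the second term of a minimal graded resolution of $T$ is generated in degrees $\ge 2$, while $L$ lives in degrees $\le 1$), so the extension $0\to N\to M\to T\to 0$ lifts along the resulting surjection $\Ext^1_\Gamma(T,M)_0\to\Ext^1_\Gamma(T,N)_0$ to a pushout diagram yielding an exact sequence $0\to M\to N\oplus X\to M\to 0$; rigidity forces this to split, contradicting Krull--Remak--Schmidt since $N\neq 0$ and $\ell(N)<\ell(M)$. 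Your reduction --- identify Loewy-length-two graded $\Gamma/J^2$-modules with degree-zero maps with representations of the quiver having two vertices and $\dim_{\mathbb C}\Gamma_1$ arrows, check that this matches $\End(-)_0$ and Yoneda $\Ext^1(-,-)_0$, then quote the classical Happel--Ringel theorem that a rigid indecomposable over a hereditary algebra is a brick, hence has endomorphism ring $\mathbb C$ over an algebraically closed field --- is a legitimate alternative. What it buys is brevity and a conceptual explanation (the lemma is the hereditary Kronecker case in disguise); what the paper's proof buys is self-containedness, since it needs neither the GMRSZ equivalence nor Happel--Ringel as black boxes inside the lemma.

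One correction of emphasis: the step you single out as ``the hard part'' is in fact the easy part, and it needs no Koszul-algebra analogue of Lemma 1.1 at all. Since $J^2M=0$, every extension $0\to M\to E\to M\to 0$ of graded $\Gamma/J^2$-modules is, by restriction of scalars along $\Gamma\to\Gamma/J^2$, already an extension of graded $\Gamma$-modules; by hypothesis it splits over $\Gamma$, and any $\Gamma$-linear splitting between modules annihilated by $J^2$ is automatically $\Gamma/J^2$-linear, so the sequence splits over $\Gamma/J^2$. Moreover, as written your appeal to the equivalence has a small logical gap: Lemma 1.1 tells you that a sequence coming from $\mathcal L_\Gamma$ splits upstairs if and only if its image splits downstairs, but to transport rigidity you must also know that \emph{every} self-extension of $M$ over $\Gamma/J^2$ arises as such an image --- which is exactly the restriction-of-scalars remark above (using that $\mathcal L_\Gamma$ is closed under extensions), or alternatively requires exactness of the quasi-inverse of the equivalence. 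With that one-line repair, the rest of your argument goes through.
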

\begin{proof} The proof follows the same idea as the one given in \cite{HR}, but we present it here for the reader's convenience.  Assume that there exists a non zero graded homomorphism $f\colon M\rightarrow M$ that is not an isomorphism, and let $N$ denote its image and $L$ its kernel. Furthermore, let $T$ denote the cokernel of the inclusion map $N\rightarrow M$.  Note that  $T$ is also generated in degree zero, and that $L$ lives in degree 1 and possibly also in degree zero. Applying $\Hom_{\Gamma}(T,-)$ to the exact sequence $0\rightarrow L\rightarrow M\rightarrow N\rightarrow 0$ and taking only the degree zero parts we obtain we get an exact sequence $$\rightarrow\Ext_{\Gamma}^1(T,M)_0\rightarrow\Ext_{\Gamma}^1(T,N)_0\rightarrow\Ext_{\Gamma}^2(T,L)_0.$$
\noindent But if $$\cdots\rightarrow P_2\rightarrow P_1\rightarrow P_0\rightarrow T\rightarrow 0$$ is the beginning of a graded minimal projective resolution of $T$, the second term $P_2$ is generated in degrees $2$ or higher, hence $\Hom_{\Gamma}(P_2,L)_0=0$ and therefore $\Ext_{\Gamma}^2(T,L)_0$ vanishes too. We obtain a surjection $\Ext_{\Gamma}^1(T,M)_0\rightarrow\Ext_{\Gamma}^1(T,N)_0$, and hence a commutative exact diagram of graded $\Gamma$-modules of Loewy length $2$ and degree zero maps:
 $$ 
\xymatrix
{ &0\ar[r]&M \ar^{f}[d]
\ar[r] &X\ar[d]\ar[r]
&T\ar@{=}[d] \ar[r] &0\\ &0 \ar[r] &N \ar[r]
&M\ar[r] &T\ar[r]& 0} 
$$
This pushout yields an exact sequence of graded $\Gamma$-modules $$0\rightarrow M\rightarrow N\oplus X\rightarrow M\rightarrow 0$$ Our assumption on $M$ implies that this sequence must split. The Krull-Remak-Schmidt theorem yields a contradiction, since  $N\ne 0$, $M$ is indecomposable and $\ell(N)<\ell(M)$.
\end{proof}

We can proceed now with the proof of our main result:
\begin{thm}  Let $F$ be an indecomposable coherent sheaf over the projective space ${\bf P}^n$ such that $\Ext^1(F,F)=0$. Then, $\End F\cong\mathbb C$.
\end{thm}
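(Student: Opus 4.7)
The plan is to translate the problem about the coherent sheaf $F$ into one about an indecomposable linear $S$-module with no graded self-extensions, and then to reduce further to the quotient Koszul algebra $S/J^2$ where Lemma~1.4 applies directly. Using Serre's theorem together with Castelnuovo--Mumford regularity, I pick an integer $d$ exceeding the regularity of $F$ and set $C = \bigoplus_{i \geq 0} H^0(\mathbf{P}^n, F(i+d))$, viewed as a finitely generated graded $S$-module generated in degree zero. For $d$ large enough, $C$ is linear, its sheafification is $F(d)$, and the standard sheaf--module dictionary identifies $\End_S(C)_0$ with $\End(F)$, and $\Ext^1_S(C_{\geq m},C)_0$ with $\Ext^1(F,F)$ for $m$ sufficiently large. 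The rigidity hypothesis on $F$ thus reads $\Ext^1_S(C_{\geq m},C)_0 = 0$, and Lemma~1.3 promotes it to $\Ext^1_S(C_{\geq m},C_{\geq m})_0 = 0$. Replacing $C$ by the appropriate shift of the truncation $C_{\geq m}$, I may assume $C$ is itself an indecomposable, non-projective linear $S$-module in $\mathcal{L}_S$ satisfying $\Ext^1_S(C,C)_0 = 0$.

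Next I invoke the equivalence of categories $\mathcal{L}_S \cong \mathcal{L}_{S/J^2}$ from \cite{GMRSZ}, given by $C \mapsto \bar C = C/J^2C$. This equivalence is exact, and combined with Lemma~1.1 it shows that $\bar C$ is an indecomposable, non-projective $S/J^2$-module of Loewy length at most $2$, still without graded self-extensions, and that $\End_{S/J^2}(\bar C)_0 \cong \End_S(C)_0$. Since $S/J^2$ is Koszul, Lemma~1.4 applied with $\Gamma = S/J^2$ now yields $\End_{S/J^2}(\bar C)_0 \cong \mathbb{C}$, whence $\End(F) \cong \End_S(C)_0 \cong \mathbb{C}$.

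The main obstacle is the reduction carried out in the first paragraph: one must ensure that the linear $S$-module $C$ extracted from $F$ is simultaneously indecomposable, intrinsically without graded self-extensions, and faithfully records the endomorphism ring of $F$ in its degree-zero part. Castelnuovo--Mumford regularity provides linearity and the identification of degree-zero endomorphisms and first extensions with their sheaf-theoretic counterparts; Lemma~1.3 is the device that allows the rigidity condition to be promoted from $\Ext^1_S(C_{\geq m},C)_0$ to the intrinsic statement $\Ext^1_S(C_{\geq m},C_{\geq m})_0 = 0$. Once this technical reduction is in place, the remainder of the argument is a formal application of Lemmas~1.1 and~1.4.
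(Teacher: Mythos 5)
Your argument is correct, and its skeleton is the same as the paper's: realize $F$ (up to twist) as the sheafification of a linear $S$-module via regularity, transfer rigidity across the equivalence $\mathcal L_S\cong\mathcal L_{S/J^2}$ using Lemma 1.1, and conclude with Lemma 1.4 applied to $\Gamma=S/J^2$. Where you differ is the middle step, namely how $\Ext^1(F,F)=0$ becomes rigidity of a module. The paper uses the direct-limit description $\Ext^1(F,F)\cong\varinjlim_k\Ext^1_S(X_{\geq k},X)_0$ together with Lemma 1.2, which gives $\Ext^1_S(S_k,X)_0=0$ for \emph{every} $k$ and hence injectivity of all transition maps; vanishing of the limit then yields $\Ext^1_S(X,X)_0=0$ at $k=0$ outright, and Lemma 1.3 is never invoked. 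You instead work at a high truncation: you assert that $\Ext^1_S(C_{\geq m},C)_0\cong\Ext^1(F,F)$ for $m\gg0$, and then you genuinely need Lemma 1.3 to pass to the intrinsic vanishing $\Ext^1_S(C_{\geq m},C_{\geq m})_0=0$ before replacing $C$ by the shifted truncation. Be aware that your stabilization claim is not a formal consequence of the direct-limit formula (a direct limit of nonzero spaces can vanish when the transition maps fail to be injective); it is true because $\Ext^i_S(S_k,C)_0\cong\Ext^i_S(\mathbb C,C)_k=0$ for $k\gg0$, the finite-dimensionality of $\Ext^i_S(\mathbb C,C)$ playing for you the role that Lemma 1.2 plays in the paper, just for large $k$ rather than all $k$. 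So the two proofs trade Lemma 1.2 for Lemma 1.3 plus this finiteness fact: the paper's route is self-contained and sharper (rigidity at every truncation level), while yours has the mild virtue of actually putting Lemma 1.3 --- stated but unused in the paper's own proof of the theorem --- to work. One point you should make explicit: the case where the truncation is free must be disposed of separately (there $F$ is a line bundle and $\End F\cong\mathbb C$ trivially), since Lemma 1.4 requires the module to be non-projective.
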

\begin{proof} By Serre's theorem, we may identify the category of coherent sheaves over ${\bf P}^n$ with the quotient category $Q\md^ZS$ of $\md^ZS$. Recall that the objects of $Q\md^ZS$ are the objects of $\md^ZS$ modulo the graded modules of finite dimension. Let $\widetilde X$ denote the sheafification of $X$, that is the image of $X$ in the quotient category. If $X$ and $Y$ are two graded $S$-modules, then $\widetilde X\cong\widetilde Y$ in $Q\md^ZS$ if and only if for some integer $k$, their truncations $X_{\geq k}$ and $Y_{\geq k}$ are isomorphic as graded $S$-modules. 
Now, let $F$ be an indecomposable sheaf with $\Ext^1(F,F)=0$. We may assume that $F=\widetilde X$ where $X$ is a linear $S$-module up to some shift (\cite{AE}), and that $X$ has no finite dimensional submodules. Then (see \cite{MV} for instance),
$\End (F)\cong\varinjlim\Hom_S(X_{\ge k},X)_0$ and $\Ext^1(F,F)\cong\varinjlim\Ext^1_S(X_{\ge k},X)_0$. For each $k$, we have an exact sequence of graded $S$-modules
$$0\rightarrow X_{\ge k+1}\rightarrow X_{\ge k}\rightarrow S_k\rightarrow0$$
\noindent where $S_k$ denotes the semisimple module $X_{\ge k}/{X_{\ge k+1}}$ concentrated in degree $k$. Applying $\Hom_S(-,X)$ and taking degrees we obtain a long exact sequence 
$$0\rightarrow\Hom_S(X_{\ge k},X)_0\rightarrow\Hom_S(X_{\ge{k+1}},X)_0\rightarrow\Ext^1_S(S_k,X)_0\rightarrow\Ext^1_S(X_{\ge k},X)_0\rightarrow\Ext^1_S(X_{\ge{k+1}},X)_0$$
\noindent since $\Hom_S(S_k,X)_0=0$ by our assumption on $X$. $\Ext^1_S(S_k,X)_0$ also vanishes by Lemma 1.2., since having a nontrivial extension $0\rightarrow X\rightarrow Y\rightarrow S_k\rightarrow 0$ yields a nonsplit exact sequence of degree $k$ shifts of linear modules 
$0\rightarrow X_{\ge k}\rightarrow Y_{\ge k}\rightarrow S_k\rightarrow 0$. Since for each $k$, $\Hom_S(X_{\ge k},X)_0=\End_S(X_{\ge k})$, we see that we have $\End(F)\cong\End_S(X)_0\cong\End_S(X_{\ge k})_0$ for each $k\ge 0$. At the same time we have a sequence of embeddings $$\Ext^1_S(X,X)_0\hookrightarrow\cdots\hookrightarrow\Ext^1_S(X_{\ge{k}},X)_0\hookrightarrow\Ext^1_S(X_{\ge{k+1}},X)_0\hookrightarrow\cdots$$    
\noindent Thus, the assumption that $\Ext^1(F,F)=0$ implies that for each $k$ we must have $\Ext_S^1( X_{\ge{k}},X)_0=0$, in particular $\Ext^1_S(X,X)_0=0$.  $X$ is a shift of a linear module so we may apply Lemma 1.1. We now apply 1.4. to conclude that the sheaf $F$ has trivial endomorphism ring. 
\end{proof}

\section{Auslander-Reiten components containing sheaves}

We start with the following general observation: 

\begin{lem} Let $A\stackrel{{\left[\begin{smallmatrix}u_1\\ u_2\end{smallmatrix}\right]}}\longrightarrow B_1\oplus B_2\stackrel{\left[\begin{smallmatrix}v_1\ v_2\end{smallmatrix}\right]}\longrightarrow C\rightarrow A[1]$ be a triangle in a Krull-Schmidt triangulated category $\mathcal T$, and assume that $C$ is indecomposable and both maps $v_1,v_2$ are nonzero. Then $\Hom_{\mathcal T}(A,C)\neq 0$.
\end{lem}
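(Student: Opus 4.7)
The plan is to argue by contradiction: suppose $\Hom_{\mathcal T}(A,C)=0$ and apply the cohomological functor $\Hom_{\mathcal T}(-,C)$ to the given triangle. This yields the long exact sequence
$$\Hom_{\mathcal T}(A[1],C)\xrightarrow{w^{*}}\End_{\mathcal T}(C)\xrightarrow{v^{*}}\Hom_{\mathcal T}(B_1\oplus B_2,C)\xrightarrow{u^{*}}\Hom_{\mathcal T}(A,C).$$
The hypothesis forces the rightmost term to vanish, so $v^{*}$ is surjective; equivalently, every morphism $B_1\oplus B_2\to C$ factors through $v$.

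First I would lift the two ``coordinate components'' of $v$. Since the morphisms $(v_1,0),(0,v_2)\colon B_1\oplus B_2\to C$ lie in the image of $v^{*}$, there exist $\phi_1,\phi_2\in\End_{\mathcal T}(C)$ with $\phi_1 v=(v_1,0)$ and $\phi_2 v=(0,v_2)$. Adding, $(\phi_1+\phi_2)v=v$, so $\phi_1+\phi_2-1_C\in\Ker v^{*}$; by exactness this difference equals $\psi w$ for some $\psi\colon A[1]\to C$, and we obtain the key identity
$$1_C=\phi_1+\phi_2-\psi w\qquad\text{in}\ \End_{\mathcal T}(C).$$

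Next I would invoke Krull--Schmidt: because $C$ is indecomposable, $\End_{\mathcal T}(C)$ is local and its non-units form a two-sided ideal. Since $1_C$ is a unit, at least one of the three summands on the right must be a unit, and each case leads to a contradiction. If $\phi_1$ is invertible then $\phi_1 v_2=0$ forces $v_2=0$; symmetrically $\phi_2$ invertible forces $v_1=0$. If $\psi w$ is a unit then $w$ admits a left inverse and is therefore a split monomorphism; combined with $w\circ v=0$ (two consecutive morphisms in a triangle compose to zero), this forces $v=0$ and hence $v_1=v_2=0$. Each outcome contradicts $v_1\neq 0\neq v_2$, so the assumption $\Hom_{\mathcal T}(A,C)=0$ must fail.

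I expect the only subtle step to be the lifting of $(v_1,0)$ and $(0,v_2)$ and the recognition that the resulting identity $1_C=\phi_1+\phi_2-\psi w$ in the local ring $\End_{\mathcal T}(C)$ is exactly what one needs; once this is in place, the three cases are all immediate.
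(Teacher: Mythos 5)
Your proof is correct, and while it rests on the same two pillars as the paper's --- factoring morphisms $B_1\oplus B_2\to C$ through $v$ via the long exact sequence of $\Hom_{\mathcal T}(-,C)$, and the locality of $\End_{\mathcal T}(C)$ --- it is organized genuinely differently, and the comparison is instructive. The paper proves the sharper statement that the specific composition $v_1u_1\colon A\to C$ is nonzero: assuming $v_1u_1=0$, the morphism $\left[\begin{smallmatrix}v_1\ 0\end{smallmatrix}\right]$ kills $u$, hence by exactness it factors as $\psi\left[\begin{smallmatrix}v_1\ v_2\end{smallmatrix}\right]$ for a \emph{single} endomorphism $\psi$ of $C$, giving $\psi v_1=v_1$ and $\psi v_2=0$; since $C$ is indecomposable, $\psi$ is an isomorphism or nilpotent, and either alternative immediately contradicts $v_1\neq 0\neq v_2$. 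You instead assume the full vanishing $\Hom_{\mathcal T}(A,C)=0$, lift \emph{both} coordinate truncations of $v$, and exploit the identity $1_C=\phi_1+\phi_2-\psi w$ in the local ring $\End_{\mathcal T}(C)$, followed by a three-case analysis. What the paper's route buys is economy (one lifting, no connecting morphism, no cases) and extra information: it exhibits the nonzero element of $\Hom_{\mathcal T}(A,C)$ explicitly as $v_1u_1$. What your route buys is slightly greater generality in the abstract setting: your case analysis uses only that the non-units of a local ring form an ideal, which is exactly the defining property of a Krull--Schmidt category, whereas the paper's dichotomy ``isomorphism or nilpotent'' implicitly assumes the radical of $\End_{\mathcal T}(C)$ is nil --- harmless in the categories at hand, which are Hom-finite over $\mathbb C$, but not part of the abstract axioms. (Incidentally, your third case can be shortened: from $wv=0$ one gets $(\psi w)v=0$ directly, so invertibility of $\psi w$ forces $v=0$ without passing through split monomorphisms.)
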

\begin{proof} We prove that $v_1u_1\neq 0$. Assume that $v_1u_1=0$. Then, since the composition ${\left[\begin{matrix}v_1\  0\end{matrix}\right]}{\left[\begin{matrix}u_1\\ u_2\end{matrix}\right]}\colon A\rightarrow C$ is equal to $0$, we have an induced homomorphism $\psi\colon C\rightarrow C$ such that $\psi{\left[\begin{matrix}v_1\ v_2\end{matrix}\right]}={\left[\begin{matrix}v_1\ 0\end{matrix}\right]}$. Thus $\psi v_1=v_1$ and $\psi v_2=0$. Since $C$ is indecomposable, $\psi$ is either an isomorphism or it is nilpotent. It cannot be nilpotent since $v_1\neq 0$ and it cannot be an isomorphism since $v_2\neq 0$, hence a contradiction.
\end{proof}

The BGG correspondence $\Phi$ is an exact equivalence of triangulated categories, hence for every graded $R$-module $M$,
we have that $\Phi(\Omega M)=\Phi(M)[-1]$. An easy computation also shows that for each finitely generated graded module $M$ and integer $i$, we have that 
$\Phi(M(i))\otimes\mathcal O(i)=\Phi(M)[i]$. It turns out that the modules corresponding to sheaves are located on the boundary of the Auslander-Reiten component containing them. This and more follows immediately from the next result by playing with the BGG correspondence. 

\begin{pro} Let $B$ and $C$ be two  $R$-modules corresponding to sheaves under the BGG correspondence. Then, for each $i>0$ $\underline\Hom_R(\tau^iB,C)_0=0$.
\end{pro}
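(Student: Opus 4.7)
The plan is to move across the BGG equivalence $\Phi$ and reduce the desired vanishing to the vanishing of a negative Ext group between coherent sheaves on $\mathbf{P}^n$. First I would identify the Auslander--Reiten translate $\tau$ on $\underline\md^Z R$. Specializing the Serre duality formula stated in the introduction to $m=1$ gives
$$D\underline\Hom_R(X,\Omega^{-1}Y)_0 \;\cong\; \underline\Hom_R(Y,\Omega^2 X(n+1))_0,$$
which, since $\Ext_R^1(X,Y)_0 \cong \underline\Hom_R(X,\Omega^{-1}Y)_0$, is the classical Auslander--Reiten formula. Reading it off yields $\tau X = \Omega^2 X(n+1)$, and by iteration $\tau^i B = \Omega^{2i} B(i(n+1))$.

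Next I would transport $\tau^i B$ through $\Phi$. The two identities recorded above, namely $\Phi(\Omega M)=\Phi(M)[-1]$ and $\Phi(M(j))\otimes\mathcal{O}(j)=\Phi(M)[j]$, together give after a short calculation
$$\Phi(\tau^i B) \;=\; \Phi(B)[i(n-1)] \otimes \mathcal{O}(-i(n+1)).$$
Because $B$ and $C$ correspond to sheaves, I may write $\Phi(B)\simeq \mathcal{B}$ and $\Phi(C)\simeq \mathcal{C}$ for vector bundles $\mathcal{B},\mathcal{C}$ regarded as stalk complexes in cohomological degree $0$.

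Since $\Phi$ is an exact equivalence of triangulated categories, this translates the left hand side of the proposition into
$$\underline\Hom_R(\tau^i B,C)_0 \;\cong\; \Hom_{\mathcal{D}^b}\bigl(\mathcal{B}(-i(n+1))[i(n-1)],\,\mathcal{C}\bigr) \;\cong\; \Ext^{-i(n-1)}\bigl(\mathcal{B}(-i(n+1)),\,\mathcal{C}\bigr).$$
Under the standing hypothesis $n>1$ of this section, for $i>0$ the exponent $-i(n-1)$ is strictly negative, and negative Ext groups between coherent sheaves vanish; the statement follows.

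The only delicate step is the bookkeeping in the second paragraph: one has to combine the syzygy shift $\Omega^{2i}$ (contributing $[-2i]$) with the internal grading shift by $i(n+1)$ (contributing $[i(n+1)] \otimes \mathcal{O}(-i(n+1))$) so that the net cohomological shift comes out to be the strictly positive $i(n-1)$. No deeper obstacle is expected once the identification $\tau = \Omega^2(n+1)$ is verified and the shifts are tracked carefully.
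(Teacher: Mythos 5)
Your proof is correct and takes essentially the same route as the paper's: identify $\tau B=\Omega^2B(n+1)$, hence $\tau^iB=\Omega^{2i}B(i(n+1))$, push this through $\Phi$ to get $\Phi(B)[i(n-1)]\otimes\mathcal O(-i(n+1))$, and conclude because a Hom in $\mathcal D^b(\coh {\bf P}^n)$ from a positively shifted stalk complex to a stalk complex (equivalently, a negative Ext between sheaves) vanishes when $i(n-1)\ge 1$. The only cosmetic difference is that you derive the formula for $\tau$ from the Serre duality stated in the introduction, whereas the paper invokes weak Koszulity and cites \cite{MVZ1}; both justifications are valid, and the computation and vanishing argument coincide.
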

\begin{proof} Since taking syzygies and graded shifts are self-equivalences of $\underline\md ^Z R$, we have by \cite{MVZ1} that $C$ is a weakly Koszul module, and so we can use the formula $\tau B=\Omega^2B(n+1)$, and so $\tau^i B=\Omega^{2i}B(ni+i)$ for all $i\ge 1$.  We have then  $$\Phi(\tau^iB)=\Phi(\Omega^{2i}B(ni+i))=\Phi(\Omega^{2i}B)[ni+i]\otimes\mathcal O(-ni-i)=\Phi(B)[ni-i]\otimes\mathcal O(-ni-i).$$
Applying the BGG correspondence, we obtain:
\begin{align*}
\underline\Hom_R(\tau^iB,C)_0&\cong\Hom_{\mathcal D^b(\coh({\bf P}^n))}(\Phi(\tau^iB),\Phi(C))\\&=\Hom_{\mathcal D^b(\coh({\bf P}^n))}(\Phi(B)[ni-i]\otimes\mathcal O(-ni-i),\Phi(C))\\&\cong\Hom_{\mathcal D^b(\coh({\bf P}^n))}(\Phi(B)(-ni-i)[ni-i],\Phi(C))=0
\end{align*}
where the last equality holds since both $\Phi(C)$ and $\Phi(B)(-ni-i)$ are sheaves, $n\geq 2$, so $ni-i\ge 1$.
\end{proof}

We have the following immediate consequence by letting $B=C$ and $i=1$ in the previous proposition, and applying Lemma 2.1.:
\begin{cor} Let $C$ be an indecomposable $R$-module corresponding to a sheaf under the BGG correspondence, where $n\ge2$. Then the Auslander-Reiten sequence ending at $C$ has indecomposable middle term. \qed
\end{cor}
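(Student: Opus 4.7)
The plan is to argue by contradiction using Proposition 2.2 together with Lemma 2.1. Consider the Auslander--Reiten triangle ending at $C$ in the stable category $\underline\md^Z R$:
$$\tau C \xrightarrow{u} E \xrightarrow{v} C \to \Omega^{-1}\tau C.$$
Suppose, for contradiction, that $E = B_1 \oplus B_2$ with both summands nonzero; then $v$ takes the matrix form $v = \left[\begin{smallmatrix} v_1 & v_2 \end{smallmatrix}\right]$.

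First I would observe that both components $v_1$ and $v_2$ must be nonzero in the stable category. This is just the minimality property of the AR triangle: if, say, $v_1 = 0$ in $\underline\md^Z R$, then the endomorphism of $E$ that is zero on $B_1$ and the identity on $B_2$ would satisfy $v\phi = v$ without being an isomorphism, contradicting the right minimality of $v$.

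Once both $v_1, v_2$ are known to be nonzero, Lemma 2.1 applied to the triangle above (which lives in the Krull--Schmidt triangulated category $\underline\md^Z R$ and has indecomposable right-hand term $C$) yields $\underline\Hom_R(\tau C, C)_0 \neq 0$. On the other hand, $C$ corresponds to a sheaf under the BGG correspondence and $n \ge 2$, so Proposition 2.2 applied with $B = C$ and $i = 1$ gives $\underline\Hom_R(\tau C, C)_0 = 0$. This contradiction forces $E$ to be indecomposable, proving the corollary.

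The only step with any content beyond direct citation is the verification that both components of $v$ are nonzero, and this is nothing more than the standard minimality of an AR triangle; everything else is immediate from the two preceding results.
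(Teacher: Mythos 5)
Your proof is correct and follows exactly the paper's intended argument: the paper derives this corollary by applying Proposition 2.2 with $B=C$ and $i=1$ together with Lemma 2.1, which is precisely what you do. Your additional verification that both components $v_1,v_2$ are nonzero (via right minimality of the map $E\rightarrow C$ in the Auslander--Reiten triangle) is a detail the paper leaves implicit, and you supply it correctly.
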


We end with the following observation:

\begin{pro} Let $\mathcal C$ be a connected component of the Auslander-Reiten quiver of $\mathcal D^b(\coh {\bf P}^n)$ for $n>1$. Then $\mathcal C$ contains at most one indecomposable coherent sheaf.
\end{pro}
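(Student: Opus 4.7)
The plan is to transport the statement to $\underline\md ^Z R$ via the BGG correspondence $\Phi$ and then combine the $\mathbb Z A_{\infty}$ shape of the components (from \cite{MVZ1}) with the vanishing in Proposition 2.2. Since $\Phi$ is an exact equivalence of triangulated categories, it identifies Auslander-Reiten components, so it suffices to show that in the AR quiver of $\underline\md ^Z R$, no connected component contains two non-isomorphic indecomposable modules both corresponding to sheaves.

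Let $B_1, B_2$ be two such $R$-modules in the same component $\mathcal C'$ of $\underline\md ^Z R$. Since $n>1$, the component $\mathcal C'$ has shape $\mathbb Z A_{\infty}$. Corollary 2.3 asserts that an indecomposable sheaf-module sits on the mouth of its component, namely on the unique $\tau$-orbit of vertices whose Auslander-Reiten triangle has indecomposable middle term, so both $B_1$ and $B_2$ lie on this mouth. Since $\tau$ acts freely on a $\mathbb Z A_{\infty}$ component and its mouth consists of a single $\tau$-orbit, there is a unique integer $j$ with $B_2\cong\tau^j B_1$.

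If $j=0$ then $B_1\cong B_2$ and we are done. Otherwise, after interchanging $B_1$ and $B_2$ if necessary, I may assume $j\geq 1$, and Proposition 2.2 applied with $B=B_1$, $C=B_2$, $i=j$ yields $\underline\Hom_R(\tau^j B_1,B_2)_0=0$. Using $\tau^j B_1\cong B_2$ this gives $\underline\Hom_R(B_2,B_2)_0=0$, which contradicts the fact that $B_2$ is nonzero in the stable category: its identity endomorphism does not factor through a free module, because $\Phi(B_2)$ is a nonzero sheaf and hence $B_2$ is not free.

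The only non-formal ingredient is the structural observation that the mouth of a $\mathbb Z A_{\infty}$ component forms a single free $\tau$-orbit, which is standard. Accordingly, the proof should reduce to a clean bookkeeping combination of Corollary 2.3, Proposition 2.2 and the BGG correspondence, and I do not anticipate any serious obstacle.
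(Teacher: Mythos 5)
Your proof is correct, and its skeleton matches the paper's: transport the problem to $\underline\md ^Z R$ via $\Phi$, use Corollary 2.3 and the $\mathbb Z A_{\infty}$ shape from \cite{MVZ1} to place both sheaf-modules on the mouth of the component, conclude they are $\tau$-translates $B_2\cong\tau^jB_1$ with $j\ge 1$, and derive a contradiction. Where you diverge is the endgame. The paper does not invoke the statement of Proposition 2.2 at this point; it reuses the computation inside its proof, namely $\Phi(\tau^iC)\cong\Phi(C)[ni-i]\otimes\mathcal O(-ni-i)$ with $ni-i\ge 1$, so that $\Phi(\tau^iC)$ is a stalk complex concentrated in a nonzero degree and therefore cannot be a sheaf: the contradiction is a degree count in the derived category. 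You instead apply Proposition 2.2 as a black box with $B=B_1$, $C=B_2$, $i=j$, and use $\tau^jB_1\cong B_2$ to conclude $\underline\Hom_R(B_2,B_2)_0=0$, which kills the identity of $B_2$ in the stable category; this is impossible since $B_2$ is not free (its BGG image is a nonzero indecomposable sheaf). Your route is self-contained given the stated results --- you never need the explicit formula for $\Phi(\tau^iB)$ --- and is arguably the cleaner deduction; the paper's route is more transparent geometrically, since it exhibits exactly why $\tau^iC$ fails to be a sheaf. Both arguments rest on the same structural input, that the mouth of a $\mathbb Z A_{\infty}$ component is a single $\tau$-orbit on which $\tau$ acts freely, which you correctly identify as the only non-formal ingredient.
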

\begin{proof} By the previous results, if $\mathcal C$ contains a sheaf, then this sheaf must lie on the its boundary. Under the BGG correspondence, two sheaves in $\mathcal C$ must correspond to two modules of the form $C$ and $\tau^iC$ for some $i\ge 1$. But by applying the BGG correspondence we get stalk complexes concentrated in different degrees, hence we obtain a contradiction. 
\end{proof}

%%%%%%%%%%%---------------------------%%%%%%%%%%%%%
%%%%%%%%%%%----- Bibliography --------%%%%%%%%%%%%%
%%%%%%%%%%%---------------------------%%%%%%%%%%%%%


\begin{thebibliography}{GGGGG}

\bibitem{AE} Avramov, L., Eisenbud, D, {\it
Regularity of modules over a Koszul algebra}, Journal of Algebra {\bf
153} (1992), 85-90.

\bibitem{ ARS} Auslander, M., Reiten, I., Smal\o, S. O.;
{\it Representation Theory of Artin Algebras}, Cambridge Studies
in Advanced Mathematics, {\bf  36}, Cambridge University Press,
Cambridge, (1995).

\bibitem {BGG} Bernstein, I.N., Gelfand, I.M., Gelfand, S.I.;
{\it Algebraic bundles over ${\bf P}^n$ and problems in linear algebra},
Funct. Anal. Appl., {\bf 12}, (1979), 212-214.

\bibitem{D} Dr\'ezet, J.-M., {\it Fibr\'es exceptionnels et suite spectrale de Beilinson g\'en\'eralis\'ee sur $P\sb 2(C)$}. 
Math. Ann.  {\bf 275}  (1986),  no. 1, 25--48. 

\bibitem{EG} Evans, E. G., Griffith, P.; {\it The syzygy
 problem,} Annals of Mathematics (2) {\bf 114} (1981), no 2, 323-333.

\bibitem{GMRSZ} Green, E. L., Mart\'inez-Villa, R., Reiten, I.,
Solberg, \O., Zacharia, D.; {\it On modules with linear
presentations}, J. Algebra {\bf  205}, (1998), no. 2, 578--604.

\bibitem{Ha} Happel, D., {\it Triangulated categories in the representation theory of finite-dimensional algebras}. 
London Mathematical Society Lecture Note Series, {\bf 119}. Cambridge University Press, Cambridge, 1988. 

\bibitem{HR} Happel, D., Ringel, C,M,; {\it Tilted Algebras}, 
Transactions of the American Mathematical Society, {\bf 274}, No. 2, 1982, pp. 399-443 

\bibitem{Hu} Huybrechts, D.; {\it Fourier-Mukai Transforms in Algebraic Geometry}, Oxford Mathematical Monographs. Clarendon Press, 2006.

\bibitem{MV} Mart\'inez-Villa, R.; {\it Koszul algebras and sheaves over projective space} arXiv math.RT/0405538.

\bibitem{MVZ1} Martinez-Villa, R.; Zacharia, D. {\it
Approximations with modules having linear resolutions}. J. Algebra
{\bf 266} (2003) 671-697.

\bibitem{MVZ2} Martinez-Villa, R.; Zacharia, D. {\it Auslander-Reiten sequences, locally free sheaves and Chebysheff polynomials.}  Compos. Math.  {\bf 142}  (2006),  no. 2, 397--408.


\end{thebibliography}
\end{document}